\documentclass{article}
\usepackage{amsmath, amssymb, amsthm, mathtools}
\usepackage{hyperref}
\usepackage{authblk}
\usepackage[margin=1in]{geometry}
\usepackage{todonotes}
\usepackage{cleveref}
\usepackage{subcaption}
\usepackage{cases}
\usepackage{tikz}
\usepackage{multirow}
\usetikzlibrary{calc}
\usetikzlibrary{decorations.pathreplacing}

\newtheorem{theorem}{Theorem}[section]
\numberwithin{theorem}{section}

\newtheorem{lemma}[theorem]{Lemma}

\newtheorem{proposition}[theorem]{Proposition}
\newtheorem{definition}[theorem]{Definition}
\theoremstyle{definition}
\newtheorem{example}{Example}

\newcommand\pihat{\hat{\pi}}
\newcommand\taubar{\overline\tau}
\newcommand{\Z}{\mathbb{Z}}
\newcommand{\shift}{\operatorname{shift}}
\newcommand{\x}{\mathbf x}
\newcommand{\xtau}{(\x,\tau)}
\newcommand{\deltaxtau}{\left(\delta,\xtau\right)}
\newcommand{\shiftBlock}[1]{\shift_{s_{#1}}\!\!\left(\block(\hat\delta, \hat\tau_{#1})\right)}
\newcommand{\Cyc}{\operatorname{Cyc}}
\newcommand{\Fxpt}{\operatorname{Fxpt}}
\newcommand{\block}{\operatorname{block}}

\begin{document}
\title{A statistic-swapping involution on the Cartesian product of the symmetric group \texorpdfstring{$S_{kn}$}{on kn letters} and the generalized symmetric group \texorpdfstring{$S(k,n)$}{S(k,n)}}
\author{Peter Kagey}
\affil{Cal Poly Pomona}
\author{Kai Mawhinney}
\affil{Harvey Mudd College}
\date{\today}
\maketitle

\begin{abstract}
  We construct a statistic-swapping involution on the Cartesian product of the generalized symmetric group $S(k,n)$ with the symmetric group $S_{kn}$, which swaps the number of fixed points in the generalized symmetric group element with the number of $k$-cycles in the symmetric group element. This gives a combinatorial proof for a probabilistic observation: the distribution of fixed points on $S(k,n)$ matches the distribution of $k$-cycles on $S_{kn}$.
\end{abstract}

\section{Introduction}

This paper gives a combinatorial proof of a statement in probability that shows that two statistics on two distinct combinatorial objects have the same distribution under a uniform sampling of the respective objects. If we choose a permutation $\pi\in S_{kn}$ and element of the  generalized symmetric group $\sigma \in S(k,n) = \mathbb{Z}_k \wr S_n = \mathbb{Z}_k^n \rtimes S_n$ uniformly at random, then
the probability $\pi$ has exactly $m$ $k$-cycles is equal to the probability that $\sigma $ has exactly $m$ \emph{fixed points}.

There are many notable existing results on \emph{derangements} of the generalized symmetric group $S(k,n)$, which are elements with no \emph{fixed points}. Gordon and McMahon \cite{GordonMcMahon2010} enumerate type-$B$ facet derangements, which occur when $k=2$ and thus $S(2,n) \cong B_n$, the hyperoctahedral group. Extending their work, Assaf \cite{Assaf2010} enumerates derangements in $S(k,n)$ for arbitrary $k$ (which appears in the On-Line Encyclopedia of Integer Sequences as A320032 \cite{OEIS}) and provides $q$- and $(q,t)$-analogs. A different approach for analyzing derangements of $S(k,n)$ using Euler's difference table can be found in Faliharimalala and Zeng \cite{FaliharimalalaZeng2008}.

Chow \cite{Chow2005} studied $q$-enumeration of derangements in the hyperoctahedral group $B_n$ via the \textit{flag major index} to obtain a $q$-analogue of
the type-$B$ derangement number, and later did something similar in type-$D$ \cite{Chow2023}, analyzing the number of $n$-derangements by their major indices, and demonstrated the ratio monotonicity of the coefficients of their generating function. We will consider type-$B$ derangements only as a special case, where $k=2$ and the element has no fixed points. We begin this paper with some preliminary definitions.

\begin{definition}
    A \emph{fixed point} of an element $\sigma = \xtau \in \mathbb{Z}_k^n \rtimes S_n$ of the generalized symmetric group is an index $1 \leq i \leq n$ such that both $x_i = 0 \in \Z_k$ and $\tau(i) = i$.
\end{definition}
\begin{example}
    Suppose $k = 3$, $n=5$, and $\sigma = \xtau = \left((0,1,0,2,1),(2)(3)(514)\right) \in \Z_3^5 \rtimes S_5$. Then $\sigma$ has a single fixed point, which occurs at index $3$ since $x_3 = 0$ and $\tau(3) = 3$.
    \label{ex:fixedPoint}
\end{example}

In the case $k=1$, both $S(k,n)$ and $S_{kn}$ are isomorphic to $S_n$. Fixed points and $1$-cycles are equivalent in this setting, so the map $(\sigma,\pi) \mapsto (\pi,\sigma)$ trivially swaps relevant statistics. In the case $k=2$, $S(2,n)$ is the hyperoctahedral group $B_n$: the symmetries of the $n$-dimensional hypercube. This can be seen by interpreting $S(2,n)$ as $\mathbb{Z}_2^n\rtimes S_n$, where the element $\pi\in S_n$ is a permutation of the diagonals of the cross-polytope, and $\sigma\in \mathbb{Z}_2^n$ determines whether the points on this diagonal stay in place (the identity), or swap (a transposition). With this interpretation, a fixed point of $\sigma \in S(2,n)$ is a pair of opposite vertices of the cross polytope which $\sigma$ fixes.

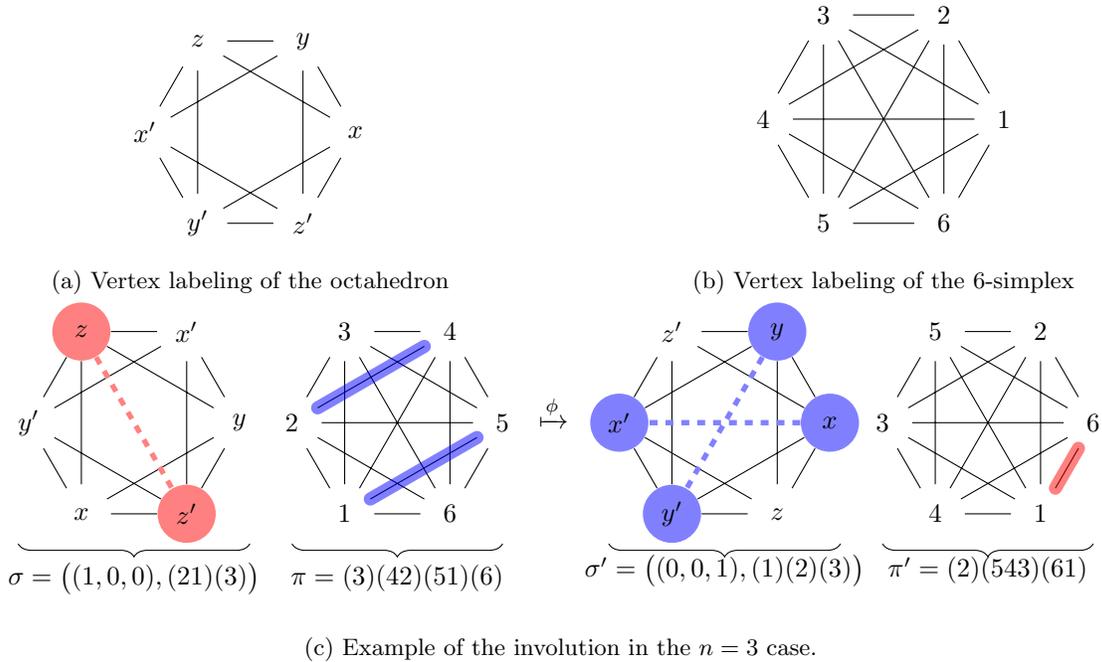
\begin{figure}
\begin{subfigure}[b]{0.49\textwidth}
\centering
\begin{tikzpicture}[scale=1.4,baseline=-2]
    \node[circle, minimum size=2.2em] (X1) at ({cos 0}, {sin 0}) {$x$};
    \node[circle, minimum size=2.2em] (Y1) at ({cos 60}, {sin 60}) {$y$};
    \node[circle, minimum size=2.2em] (Z1) at ({cos 120}, {sin 120}) {$z$};
    \node[circle, minimum size=2.2em] (X2) at ({cos 180}, {sin 180}) {$x'$};
    \node[circle, minimum size=2.2em] (Y2) at ({cos 240}, {sin 240}) {$y'$};
    \node[circle, minimum size=2.2em] (Z2) at ({cos 300}, {sin 300}) {$z'$};

    \draw
        (X1)--(Y1)
        (X1)--(Z1)
        (X1)--(Y2)
        (X1)--(Z2)
        (Y1)--(Z1)
        (Y1)--(X2)
        (Y1)--(Z2)
        (Z1)--(X2)
        (Z1)--(Y2)
        (X2)--(Y2)
        (X2)--(Z2)
        (Y2)--(Z2)
    ;
\end{tikzpicture}
\caption{Vertex labeling of the octahedron}
\label{subfig:cubeDefault}
\end{subfigure}
\begin{subfigure}[b]{0.49\textwidth}
\centering
\begin{tikzpicture}[scale=1.6]
        \node[circle, minimum size=2.2em] (A1) at ({cos 0}, {sin 0}) {1};
        \node[circle, minimum size=2.2em] (A2) at ({cos 60}, {sin 60}) {2};
        \node[circle, minimum size=2.2em] (A3) at ({cos 120}, {sin 120}) {3};
        \node[circle, minimum size=2.2em] (A4) at ({cos 180}, {sin 180}) {4};
        \node[circle, minimum size=2.2em] (A5) at ({cos 240}, {sin 240}) {5};
        \node[circle, minimum size=2.2em] (A6) at ({cos 300}, {sin 300}) {6};
    \draw
        (A1) -- (A2)
        (A1) -- (A3)
        (A1) -- (A4)
        (A1) -- (A5)
        (A1) -- (A6)
        (A2) -- (A3)
        (A2) -- (A4)
        (A2) -- (A5)
        (A2) -- (A6)
        (A3) -- (A4)
        (A3) -- (A5)
        (A3) -- (A6)
        (A4) -- (A5)
        (A4) -- (A6)
        (A5) -- (A6)
    ;
\end{tikzpicture}
\caption{Vertex labeling of the $6$-simplex}
\label{subfig:simplexDefault}
\end{subfigure}
\begin{subfigure}[b]{0.99\textwidth}
\[
\begin{tikzpicture}[scale=1.4,baseline=-2]
    \node[circle, minimum size=2.2em] (Z1) at ({cos 0}, {sin 0}) {$y$};
    \node[circle, minimum size=2.2em] (Y1) at ({cos 60}, {sin 60}) {$x'$};
    \node[circle, minimum size=2.2em, fill=red!50!white] (Z1) at ({cos 120}, {sin 120}) {$z$};
    \node[circle, minimum size=2.2em] (Z2) at ({cos 180}, {sin 180}) {$y'$};
    \node[circle, minimum size=2.2em] (Y2) at ({cos 240}, {sin 240}) {$x$};
    \node[circle, minimum size=2.2em, fill=red!50!white] (Z2) at ({cos 300}, {sin 300}) {$z'$};

    \draw
        (X1)--(Y1)
        (X1)--(Z1)
        (X1)--(Y2)
        (X1)--(Z2)
        (Y1)--(Z1)
        (Y1)--(X2)
        (Y1)--(Z2)
        (Z1)--(X2)
        (Z1)--(Y2)
        (X2)--(Y2)
        (X2)--(Z2)
        (Y2)--(Z2)
    ;

    \draw[dashed, red!50, line width=0.2em] (Z1)--(Z2);

    \begin{scope}[xshift=2.5cm]
        \node[circle, minimum size=2.2em] (A1) at ({cos 0}, {sin 0}) {5};
        \node[circle, minimum size=2.2em] (A2) at ({cos 60}, {sin 60}) {4};
        \node[circle, minimum size=2.2em] (A3) at ({cos 120}, {sin 120}) {3};
        \node[circle, minimum size=2.2em] (A4) at ({cos 180}, {sin 180}) {2};
        \node[circle, minimum size=2.2em] (A5) at ({cos 240}, {sin 240}) {1};
        \node[circle, minimum size=2.2em] (A6) at ({cos 300}, {sin 300}) {6};
    \end{scope}
    \draw
        (A1) -- (A2)
        (A1) -- (A3)
        (A1) -- (A4)
        (A1) -- (A5)
        (A1) -- (A6)
        (A2) -- (A3)
        (A2) -- (A4)
        (A2) -- (A5)
        (A2) -- (A6)
        (A3) -- (A4)
        (A3) -- (A5)
        (A3) -- (A6)
        (A4) -- (A5)
        (A4) -- (A6)
        (A5) -- (A6)
    ;
    \draw[line width=5, opacity=0.5, blue, line cap=round] (A2) -- (A4);
    \draw[line width=5, opacity=0.5, blue, line cap=round] (A1) -- (A5);

    \draw [decorate,decoration={brace,amplitude=5pt}]
  (1.1,-1.15) -- (-1.1,-1.15) node[midway,yshift=-1.3em]{$\sigma = \left((1,0,0), \operatorname{(21)(3)}\right)$};
    \draw [decorate,decoration={brace,amplitude=5pt}]
  (3.5,-1.15) -- (1.5,-1.15) node[midway,yshift=-1.3em]{$\pi = (3)(42)(51)(6)$};
\end{tikzpicture}
\xmapsto{\phi}
\begin{tikzpicture}[scale=1.4,baseline=-2]
    \node[circle, minimum size=2.2em, fill=blue!50!white] (X1) at ({cos 0}, {sin 0}) {$x$};
    \node[circle, minimum size=2.2em, fill=blue!50!white] (Y1) at ({cos 60}, {sin 60}) {$y$};
    \node[circle, minimum size=2.2em] (Z1) at ({cos 120}, {sin 120}) {$z'$};
    \node[circle, minimum size=2.2em, fill=blue!50!white] (X2) at ({cos 180}, {sin 180}) {$x'$};
    \node[circle, minimum size=2.2em, fill=blue!50!white] (Y2) at ({cos 240}, {sin 240}) {$y'$};
    \node[circle, minimum size=2.2em] (Z2) at ({cos 300}, {sin 300}) {$z$};

    \draw
        (X1)--(Y1)
        (X1)--(Z1)
        (X1)--(Y2)
        (X1)--(Z2)
        (Y1)--(Z1)
        (Y1)--(X2)
        (Y1)--(Z2)
        (Z1)--(X2)
        (Z1)--(Y2)
        (X2)--(Y2)
        (X2)--(Z2)
        (Y2)--(Z2)
    ;

    \draw[dashed, blue!50, line width=0.2em]
        (X1)--(X2)
        (Y1)--(Y2)
    ;

    \begin{scope}[xshift=2.5cm]
        \node[circle, minimum size=2.2em] (A1) at ({cos 0}, {sin 0}) {6};
        \node[circle, minimum size=2.2em] (A2) at ({cos 60}, {sin 60}) {2};
        \node[circle, minimum size=2.2em] (A3) at ({cos 120}, {sin 120}) {5};
        \node[circle, minimum size=2.2em] (A4) at ({cos 180}, {sin 180}) {3};
        \node[circle, minimum size=2.2em] (A5) at ({cos 240}, {sin 240}) {4};
        \node[circle, minimum size=2.2em] (A6) at ({cos 300}, {sin 300}) {1};
    \end{scope}
    \draw
        (A1) -- (A2)
        (A1) -- (A3)
        (A1) -- (A4)
        (A1) -- (A5)
        (A1) -- (A6)
        (A2) -- (A3)
        (A2) -- (A4)
        (A2) -- (A5)
        (A2) -- (A6)
        (A3) -- (A4)
        (A3) -- (A5)
        (A3) -- (A6)
        (A4) -- (A5)
        (A4) -- (A6)
        (A5) -- (A6)
    ;
    \draw[line width=5, opacity=0.5, red, line cap=round] (A6) -- (A1);

    \draw [decorate,decoration={brace,amplitude=5pt}]
  (1.1,-1.15) -- (-1.1,-1.15) node[midway,yshift=-1em]{$\sigma' = \left((0,0,1),(1)(2)(3)\right)$};
    \draw [decorate,decoration={brace,amplitude=5pt}]
  (3.5,-1.15) -- (1.5,-1.15) node[midway,yshift=-1em]{$\pi' = (2)(543)(61)$};
\end{tikzpicture}
\]
\caption{Example of the involution in the $n=3$ case.}
\label{subfig:map}
\end{subfigure}
\caption{In the $k=2$ and $n=3$ case, $S(2,3)=B_3$ acts on an octahedron, and $S_6$ acts on the $6$-simplex. Here, the fixed points of $S(2,3)$ fix pairs of antipodal vertices of the octahedron, and $k$-cycles reflect edges of the $6$-simplex. In this example, there is one fixed point and two $k$-cycles in the first pair, which map to a pair with two fixed points and one $k$-cycle.
}
\end{figure}

\begin{example}
    Let $k=2$ and $n=3$. Then there are $6! = 720$ permutations in $S_{kn}$ and $2^3 3! = 48$ elements of the generalized symmetric group $S(k,n)$. Their respective statistics, $k$-cycles and fixed points, are presented in \Cref{table:statistic}.
\begin{table}[h!]
    \centering
    \begin{tabular}{|l|r|r|r|r|r|}
      \hline
       & 0 & 1 & 2 & 3 & total \\
      \hline
      $S_{kn}$ & 435 & 225 & 45 & 15 & 720\\
      \hline
      $S(k,n)$ & 29 & 15 & 3 & 1 & 48\\
      \hline
    \end{tabular}
    \caption{The distribution of the number of $k$-cycles and the number fixed points in $S_{kn}$ and $S(k,n)$ respectively, where $k=2,n=3$. The value of the respective statistics is given by the top entry in each column.}
    \label{table:statistic}
\end{table}
\end{example}

\begin{theorem}
Let $\Cyc_m(S_{kn})$ denote the set of permutations in $S_{kn}$ with exactly $m$ $k$-cycles, and let $\Fxpt_m(S(k,n))$ denote the set of elements of $S(k,n)$ with exactly $m$ fixed points. Then \[
    \frac{
        \left|\Cyc_m(S_{kn})\right|
    }{
        \left|S_{kn}\right|
    } =
    \frac{
        \left|\Fxpt_m\!\left(S(k,n)\right)\right|
    }{
        \left|S(k,n)\right|
    }.
\]
\label{eq:mainTheorem}
\end{theorem}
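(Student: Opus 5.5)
Clearing denominators, the statement is equivalent to
\[
|\Cyc_m(S_{kn})|\cdot|S(k,n)| = |\Fxpt_m(S(k,n))|\cdot|S_{kn}|,
\]
that is, to showing that on $S(k,n)\times S_{kn}$ the pairs $(\sigma,\pi)$ with $\pi$ having $m$ $k$-cycles are equinumerous with those where $\sigma$ has $m$ fixed points. I would first verify the identity by counting, and then (as the abstract suggests) aim to realize it by an explicit involution.

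\textbf{Counting step.} First count $\Fxpt_m(S(k,n))$ by inclusion--exclusion. Choose a set $A$ of $j$ indices forced to be fixed points; the remaining $n-j$ coordinates form an arbitrary element of $S(k,n-j)$, giving $k^{n-j}(n-j)!$ elements. Hence
\[
|\Fxpt_m(S(k,n))| = \sum_{j\ge m}(-1)^{j-m}\binom{j}{m}\binom{n}{j}k^{n-j}(n-j)!,
\]
and dividing by $k^n n!$ gives $\frac{1}{m!}\sum_{i=0}^{n-m}\frac{(-1)^i}{i!\,k^{m+i}}$.

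Similarly, for $S_{kn}$, choose $j$ disjoint $k$-cycles in $\frac{(kn)!}{(kn-kj)!\,k^j j!}$ ways and permute the remaining $kn-kj$ letters arbitrarily. This gives
\[
|\Cyc_m(S_{kn})| = \sum_{j\ge m}(-1)^{j-m}\binom{j}{m}\frac{(kn)!}{k^j j!},
\]
and dividing by $(kn)!$ gives $\sum_{j}(-1)^{j-m}\binom{j}{m}\frac{1}{k^j j!}$. The two sums agree termwise after substituting $j=m+i$ and noting that $\binom{n}{j}(n-j)!/n!=1/j!$. This already proves the theorem; the only care needed is to make sure the inclusion--exclusion over the two families of objects (forced fixed points, forced $k$-cycles) is set up correctly.

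\textbf{Bijective step.} For a combinatorial proof, I would exploit the termwise match. Pairs $(\sigma,\pi)$ together with a marked set of $j$ $k$-cycles of $\pi$ are in natural correspondence with pairs together with a marked set of $j$ fixed points of $\sigma$, because both weighted counts equal $(kn)!\,k^{n-j}n!/j!$. Concretely, a marked $k$-cycle in $\pi$ is a block of $k$ letters with a cyclic order, and a marked fixed point of $\sigma$ is an index with zero colour and trivial $\tau$-action. A swap would move the "free" data (the cyclic rotation among the $k$ letters, and the colour $x_i\in\Z_k$) between the two components; this is suggested by the figure, where $k$-cycles in $\pi$ become antipodal fixed pairs.

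\textbf{Main obstacle.} I expect the hard part to be upgrading this marked-object bijection to an honest involution that preserves the statistics exactly rather than only up to signs. This needs either a sign-reversing involution (Garsia--Milne style) or a direct encoding. My first attempt would be a direct encoding: split $[kn]$ into $n$ blocks of size $k$ using $\tau$, and let the $k$-cycles of $\pi$ lying on blocks determined by fixed points be exchanged with the fixed-point data. Failing that, the counting step stands as a complete proof.
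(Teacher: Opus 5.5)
Your counting step is correct and already proves the theorem. After normalization, both inclusion--exclusion sums equal $\frac{1}{m!}\sum_{i=0}^{n-m}\frac{(-1)^i}{i!\,k^{m+i}}$. One small point: the simplified term $(kn)!/(k^j j!)$ does not vanish for $j>n$, so state explicitly that the $S_{kn}$ sum runs over $m\le j\le n$. Your bijective step is only a sketch, but it is not needed.

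This is a genuinely different route from the paper's, which never computes either distribution. The paper constructs a bijection $f\colon S_{kn}\to D_{k,n}\times S(k,n)$ from Stanley's fundamental bijection:
\begin{itemize}
\item cut $\hat\pi$ into $n$ consecutive blocks of length $k$, and let $\delta$ be the product of these blocks read as $k$-cycles;
\item obtain $\tau$ from the relative order of the block maxima $\overline\tau_i$;
\item let $x_{\hat\tau_i}$ be the distance of $\overline\tau_i$ to the end of its cycle in $\pi$, taken mod $k$.
\end{itemize}
It then shows two things: the $k$-cycles of $\pi$ correspond exactly to the fixed points of $f_2(\pi)$, and $f$ can be inverted by recovering the cyclic shift of each block. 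Consequently $f$ restricts to a bijection $\Cyc_m(S_{kn})\to D_{k,n}\times\Fxpt_m(S(k,n))$. Since $|S_{kn}|=|D_{k,n}|\,|S(k,n)|$, the theorem follows, and the involution $\phi(\sigma',\pi)=\bigl(f_2(\pi),f^{-1}(f_1(\pi),\sigma')\bigr)$ comes for free.

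Your approach buys brevity and an explicit closed form for the common probability. However, it is a signed-sum argument, and it explains the identity only as a termwise coincidence: both normalized forced-$j$ counts equal $1/(k^j j!)$. The paper's approach buys a positive, statistic-preserving correspondence and the advertised involution, at the cost of substantial bookkeeping to invert $f$. It also dissolves what you flagged as the main obstacle. No Garsia--Milne machinery is needed, because any bijection $S_{kn}\to D_{k,n}\times S(k,n)$ carrying $k$-cycles to fixed points yields an honest statistic-swapping involution by this swap. Note that in the paper the blocks come from cutting $\hat\pi$, with $\tau$ read off from them, not the other way around.
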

This yields the following two combinatorial statements:
\begin{align}
    \left|\Cyc_m(S_{kn})\right|\left|S(k,n)\right|
    &=
    \left|\Fxpt_m\!\left(S(k,n)\right)\right|
    \left|S_{kn}\right|
    \\
    \left|\Fxpt_\alpha\!\left(S(k,n)\right)\right|
    \left|\Cyc_\beta(S_{kn})\right|
    &=
    \left|\Fxpt_\beta\!\left(S(k,n)\right)\right|
    \left|\Cyc_\alpha(S_{kn})\right|.
    \label{eq:combinatorialEquation}
\end{align}
We will provide a bijective proof of \Cref{eq:combinatorialEquation} (and thus \Cref{eq:mainTheorem}) by constructing an involution on $S_{kn} \times S(k,n)$ that swaps the relevant statistics: the number of $k$-cycles in the permutation is exchanged with the number of fixed points in the generalized symmetric group element.

\subsection{Decomposing the involution}
Instead of constructing the involution directly, we first describe and construct an auxiliary function.

\begin{definition}
    Throughout the paper, we will use the conventions that \begin{align*}
        \pi &\in S_{kn}, \\
        f(\pi) &= (\delta, \sigma) \in D_{k,n} \times S(k,n),\text{ and} \\
        \sigma &= \xtau \in S(k,n) = \mathbb{Z}_k^n \rtimes S_n,
    \end{align*} where
    $D_{k,n}$ denotes the set of permutations in $S_{kn}$ that consist of $n$ disjoint $k$-cycles, that is their cycle type is $(k,\dots,k) = (k^n)$:
    \[
        D_{k,n} = \left\{\pi\in S_{kn} \mid \operatorname{type}(\pi) = (k^n) \right\}.
    \]
\end{definition}

\begin{theorem}\label{thm:involution}
    Suppose that $f\colon S_{kn} \to D_{k,n} \times S(k,n)$ is a bijection such that $f(\pi) = \left(f_1(\pi), f_2(\pi)\right) = (\delta, \sigma)$ and the number of $k$-cycles in $\pi$ is equal to the number of fixed points in $\sigma$.

    Then the map $\phi \colon S(k,n) \times S_{kn} \to S(k,n) \times S_{kn}$ defined as \[
        \phi(\sigma', \pi) = \left(f_2(\pi),f^{-1}(f_1(\pi),\sigma')\right)
        = \left(\sigma, f^{-1}(\delta, \sigma')\right)
    \]
    is an involution, such that the number of $k$-cycles in $\pi$ is equal to the number of fixed points in $\sigma$ and the number of $k$-cycles in $\pi'$ is equal to the number of fixed points in $\sigma'$.
\end{theorem}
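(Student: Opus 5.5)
The proof is a direct verification from the definition of $\phi$, using only that $f$ is a bijection and that it matches $k$-cycles with fixed points. Write $\phi(\sigma',\pi) = (\sigma,\pi')$, where $f(\pi) = (\delta,\sigma)$ and $\pi' = f^{-1}(\delta,\sigma')$. The map is well defined: $(\delta,\sigma') \in D_{k,n}\times S(k,n)$ lies in the codomain of $f$, and $f$ is a bijection, so $\pi'$ exists and is unique.

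\textbf{Step 1 (statistics).} The claim about $\pi$ and $\sigma$ is exactly the hypothesis on $f$, since $f_2(\pi)=\sigma$. For $\pi'$, we have $f(\pi') = (\delta,\sigma')$ by construction. Applying the hypothesis to $\pi'$ shows that the number of $k$-cycles in $\pi'$ equals the number of fixed points in $f_2(\pi') = \sigma'$.

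\textbf{Step 2 (involution).} Apply $\phi$ again, to $(\sigma,\pi')$. We need $f(\pi')$, which is $(\delta,\sigma')$. So $f_1(\pi') = \delta$ and $f_2(\pi') = \sigma'$, and therefore
\[
\phi(\sigma,\pi') = \left(f_2(\pi'), f^{-1}(f_1(\pi'),\sigma)\right) = \left(\sigma', f^{-1}(\delta,\sigma)\right) = (\sigma',\pi),
\]
where the last equality holds because $f(\pi) = (\delta,\sigma)$ and $f$ is injective. Hence $\phi\circ\phi = \mathrm{id}$. In particular $\phi$ is a bijection of $S(k,n)\times S_{kn}$.

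\textbf{Consequence.} Step 1 shows that $\phi$ sends pairs with ($m$ fixed points in the first coordinate, $m'$ $k$-cycles in the second) to pairs with ($m'$ fixed points, $m$ $k$-cycles). Restricting $\phi$ to $\Fxpt_\alpha(S(k,n))\times\Cyc_\beta(S_{kn})$ therefore gives a bijection onto $\Fxpt_\beta(S(k,n))\times\Cyc_\alpha(S_{kn})$, which is \Cref{eq:combinatorialEquation}.

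There is no real obstacle in this statement. The only care needed is in Step 2: we must use $f(\pi') = (\delta,\sigma')$ to recover \emph{both} coordinates, which keeps the $D_{k,n}$-component $\delta$ unchanged across the two applications. All the substance lies in constructing $f$, which this theorem takes as a hypothesis.
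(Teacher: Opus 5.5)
Your proof is correct and is essentially the paper's argument: apply $\phi$ twice, use $f(\pi') = (\delta,\sigma')$ to read off both $f_1(\pi') = \delta$ and $f_2(\pi') = \sigma'$, and use injectivity of $f$ to get $f^{-1}(\delta,\sigma) = \pi$; the statistic claims then follow from the hypothesis on $f$ applied to $\pi$ and to $\pi'$. Your bookkeeping ($\pi \leftrightarrow \sigma$, $\pi' \leftrightarrow \sigma'$) is actually cleaner than the paper's. The paper's closing sentences silently switch to the naming $\phi(\sigma,\pi) = (\sigma',\pi')$, and so they state the correspondence with $\sigma$ and $\sigma'$ interchanged.
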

\begin{proof}
    Let $(\sigma', \pi) \in S(k,n) \times S_{kn}$. Denote $f(\pi) = (\delta, \sigma)$ and denote $f^{-1}(\delta, \sigma') = \pi'$.
    To check that this is an involution, we apply the map twice: \begin{align*}
        \phi(\phi(\sigma', \pi))
        &= \phi\!\left(\sigma, f^{-1}(\delta, \sigma')\right)
        = \phi(\sigma,\pi') \\
        &= \left(f_2(\pi'),f^{-1}(f_1(\pi'),\sigma)\right)
        = \left(\sigma',f^{-1}(\delta,\sigma)\right)
        = \left(\sigma', \pi\right)
    \end{align*}
    Moreover, by the assumption on $f$, if $\pi$ has $\alpha$ $k$-cycles, then $\sigma'$ has $\alpha$ fixed points, and if $\pi'$ has $\beta$ $k$-cycles, then $\sigma$ has $\beta$ fixed points. Thus $\phi(\sigma, \pi) = (\sigma', \pi')$ switches the number of $k$-cycles and fixed points.
\end{proof}
Therefore, to construct our desired involution, we construct $f\colon S_{kn} \to D_{k,n} \times S(k,n)$, which has the above properties. Our construction repeatedly uses Stanley's fundamental bijection and its inverse, which we define here.
\begin{definition}[Stanley's fundamental bijection \cite{Stanley2011EC1}]
We define the map $S_n \hat\to S_n$ by $\pi \mapsto \hat\pi$, where \[
    \pi =
    (c^{(1)}_1\cdots c^{(1)}_{\ell_1})
    (c^{(2)}_1\cdots c^{(2)}_{\ell_2})
    \cdots
    (c^{(t)}_1\cdots c^{(t)}_{\ell_t})
\] is written in canonical cycle notation (i.e., the first letter in each cycle, $c^{(i)}_1$, is the largest letter in its cycle, and the cycles are ordered in increasing order by first letter when read from left to right), and \[
    \hat\pi
    = \hat\pi_1\hat\pi_2\cdots\hat\pi_n
    = c^{(1)}_1\cdots c^{(1)}_{\ell_1}
    c^{(2)}_1\cdots c^{(2)}_{\ell_{2}}
    \cdots
    c^{(t)}_1\cdots c^{(t)}_{\ell_t}
\] is written as a word by dropping the parentheses in $\hat\pi$.

The map is inverted by placing a parenthesis at the start of the word, before each record read left to right, and at the end of the word.
\end{definition}

\subsection{The auxilliary bijection}
We will now define the auxiliary bijection $f \colon S_{kn} \to D_{k,n} \times S(k,n)$ describing each step with the aid of a running example. We start by defining the first coordinate of the output.
\begin{definition}
    The map $f_1\colon S_{kn} \to D_{k,n}$ is given by \[
        f_1(\pi) = \delta = (\hat\pi_1\hat\pi_2\cdots\hat\pi_k)
        (\hat\pi_{k+1}\hat\pi_{k+2}\cdots\hat\pi_{2k})
        \cdots
        (\hat\pi_{kn-k+1}\hat\pi_{kn-k+2}\cdots\hat\pi_{kn}),
    \]
    which is the result of applying Stanley's fundamental bijection to $\pi$, and then re-parenthesizing into $n$ disjoint $k$-cycles.
\end{definition}
Note that the resulting permutation is not necessarily written in canonical cycle notation.
\begin{example}
\label{ex:MainExample}
    This example is split into parts and will be continued throughout this section. Let $n=5$ and $k=3$, and let
    \[\pi = \mathrm{(8345)(9)(B1A)(F726CED)} \in S_{kn},\]
    where $1 < 2 < \dots < 9 < A < \dots < F$. Since $\pi$ is already written in canonical cycle notation, applying Stanley's bijection to $\pi$ means that $\hat\pi$ is the word that results from dropping the parentheses: \[
        \hat\pi = \mathrm{8345 9 B1A F726CED}
    \]
    Now, we write $\delta \in D_{k,n}$ in cycle notation by grouping every $k$ letters of $\hat\pi$ into a cycle: \[
        f_1(\pi) = \delta = \mathrm{(834)(59B)(1AF)(726)(CED)} = \mathrm{(726)(834)(B59)(EDC)(F1A)}.
    \]
\end{example}
Now we give a definition which will be useful in constructing the map $f_2(\pi) = (\mathbf{x},\tau) \in \mathbb{Z}_k^n \rtimes S_n$.
\begin{definition}
    Given a word $\hat\pi \in S_{kn}$, and $1 \leq i \leq n$, we denote the length-$k$ subword
    \[
        \block(\hat\pi,i) = \hat\pi_{ki-k+1} \hat\pi_{ki-k+2} \cdots \hat\pi_{ki},
    \]
    which we call the \em{$i$-th block} of $\hat\pi$.
\end{definition}
Now we define the first part of $f_2$.
\begin{definition}\label{def:tau}
    Given the word $\hat\pi\in S_{kn}$, define $\overline\tau = \overline\tau_1\dots\overline\tau_n$ where:
    \[
        \overline\tau_i = \max(\block(\hat\pi,i)).
    \]
    Let $\hat\tau$ be the word that is given by re-indexing $\overline\tau$ to letters in $[n]=\{1,2,\dots,n\}$, preserving the relative order.
    Let $\tau$ be the permutation resulting from applying the inverse of Stanley's fundamental bijection to $\hat\tau$.
\end{definition}
Notice that the blocks partition the word: $\hat\pi = \block(\hat\pi,1)\block(\hat\pi,2)\dots\block(\hat\pi,n)$.
\begin{example}
    Continuing with the previous example, with $\hat\pi = \mathrm{834\,5 9 B\,1A F\,726\,CED}$, we can compute $\taubar = \mathrm{8BF7E}$, $\hat\tau = 23514$, and (by applying the inverse of Stanley's fundamental bijection) $\tau = (2)(3)(514)$.
\end{example}
Next, we define the other part of $f_2$.

\begin{definition}\label{def:end-of-cycle-index}
    Let $\mathbf{x} = (x_1, x_2, \dots, x_n) \in \mathbb{Z}_k^n$ where $x_{i} \in \Z_k$ is the distance of the $i$th smallest letter of $\overline{\tau}$ to the end of its cycle in $\pi$, when $\pi$ is written in canonical cycle notation.

    Because $\overline{\tau}_i$ is the $\hat\tau_i$th smallest block leader, we may equivalently define $x_{\hat\tau_i}$ as the distance of $\overline{\tau}_i$ to the end of its cycle in $\pi$.
\end{definition}

The first perspective is simpler to write down $\mathbf{x}$, yet we will use the second more heavily throughout this paper.

\begin{example}
    Recall from the previous examples that $\taubar = \mathrm{8BF7E}$, so we may list the block leaders in ascending order as $78BEF$. These letters at distances of $6$, $4$, $3$, $2$, and $7$ respectively from the end of their cycles in $\pi = \mathrm{(8345)(9)(B1A)(F726CED)}$. We have \begin{align*}
        \x = (x_1, x_2, x_3, x_4, x_5) &\equiv (6,4,3,2,7) \equiv (0,1,0,2,1) \in \mathbb{Z}^5_3.
    \end{align*}
    We can instead use the second definition and see that they are equivalent. From left to right, the block leaders $\overline{\tau}_1,\dots,\overline{\tau}_5$ are $8$, $B$, $F$, $7$, and $3$, which are distances of $4$, $3$, $7$, $6$, and $2$ from the end of their cycles, respectively. Note $\hat\tau = 23514$. Thus:
    \begin{align*}
        \x = (x_1, x_2, x_3, x_4, x_5) &= (x_{\hat\tau_4}, x_{\hat\tau_1}, x_{\hat\tau_2}, x_{\hat\tau_5}, x_{\hat\tau_3})\equiv (6,4,3,2,7) \equiv (0,1,0,2,1) \in \mathbb{Z}^5_3.
    \end{align*}
\end{example}
Finally, we simply put the above components together.
\begin{definition}
    Let $f\colon S_{kn} \to D_{k,n} \times S(k,n)$ be defined as
    \(f(\pi) = \left(f_1(\pi),f_2(\pi)\right) = \deltaxtau\).
\end{definition}
We finish by checking that our example has the claimed property that the number of $k$-cycles in $\pi$ agrees with the number of fixed points in $\xtau$.
\begin{example}
     Continuing the above examples, $\pi = \mathrm{(8345)(9)(B1A)(F726CED)}$ has one $k$-cycle, $\mathrm{(B1A)}$. As shown in \Cref{ex:fixedPoint}, $\big(\underbrace{(0,1,0,2,1)}_\mathbf{x},\underbrace{(2)(3)(514)}_\tau\big)\in S(k,n)$ has one fixed point, namely $x_3=0$ and $\tau(3) = 3$.
\end{example}
In Section \ref{sec:StatisticsPreserved}, we will show that this construction always results in the number of $k$-cycles in the input agreeing with the number of fixed points in the image. In Section \ref{sec:Bijection}, we will show that we can invert $f$, and thus the map gives the desired bijection.

\section{\texorpdfstring{$k$}{k}-cycles map to fixed points}
\label{sec:StatisticsPreserved}
In this section, we prove that if $\pi$ has $m$ $k$-cycles and $f(\pi) = \left(\delta, \sigma'\right)$, then $\sigma' = \xtau \in S(k,n)$ has $m$ fixed points.

\begin{lemma}\label{lem:k-cycles}
    For $\pi \in S_n$ we have that $\pihat_i$ appears at the beginning of a $k$-cycle in $\pi$ if and only if
        $\pihat_j \le \pihat_i$ for all $j < i + k$ and
        $\pihat_{i+k} > \pihat_i$.
\end{lemma}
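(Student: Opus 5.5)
The plan is to reduce the statement to the record (left-to-right maximum) characterization built into the inverse of Stanley's fundamental bijection. Throughout, $\pi\in S_N$ with $N$ the length of the word; the statement writes $S_n$, but the argument does not depend on the length. I will adopt the convention $\pihat_{N+1}=+\infty$, so that the condition $\pihat_{i+k}>\pihat_i$ holds automatically when $i+k=N+1$ and is impossible to satisfy when $i+k>N+1$.

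First I record the basic facts from the definition. When $\pi$ is written in canonical cycle notation, the first letters of the cycles increase from left to right, and each is the largest letter of its own cycle. Hence each first letter exceeds every letter preceding it in $\pihat$, so it is a record. Conversely, a letter that is not first in its cycle is smaller than the first letter of its own cycle, which precedes it, so it is not a record. Therefore the positions of cycle starts in $\pihat$ are exactly the positions of records. A cycle starting at position $i$ has length equal to the gap from $i$ to the next record position, or to $N+1$ if there is no later record. So $\pihat_i$ begins a $k$-cycle if and only if (a) $\pihat_i$ is a record, (b) none of $\pihat_{i+1},\dots,\pihat_{i+k-1}$ is a record, and (c) either $\pihat_{i+k}$ is a record or $i+k=N+1$.

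For the forward direction, assume (a)--(c). By (a), $\pihat_j\le\pihat_i$ for all $j\le i$. I then show by induction on $j=i+1,\dots,i+k-1$ that the running maximum stays equal to $\pihat_i$. Since $\pihat_j$ is not a record, it is smaller than some earlier letter, hence smaller than the running maximum, which is $\pihat_i$. This establishes $\pihat_j\le\pihat_i$ for all $j<i+k$. With that in hand, (c) says that $\pihat_{i+k}$ exceeds everything before it, and in particular $\pihat_{i+k}>\pihat_i$; the end-of-word case is covered by the convention.

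For the converse, assume $\pihat_j\le\pihat_i$ for all $j<i+k$ and $\pihat_{i+k}>\pihat_i$. Then $\pihat_i$ is a record. Each $\pihat_j$ with $i<j<i+k$ is strictly less than $\pihat_i$ (the letters are distinct), so it is not a record. Finally, $\pihat_{i+k}$ exceeds $\pihat_i$, which is the maximum of all earlier letters, so it is a record; or else $i+k=N+1$. The case $i+k>N+1$ is excluded by the convention. Thus (a)--(c) hold.

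I expect the only delicate point to be the boundary and convention issues rather than any real obstacle. These are the meaning of $\pihat_{i+k}$ when $i+k=N+1$, and making precise the equivalence between ``not a record'' and ``below the running maximum $\pihat_i$'' via the induction above.
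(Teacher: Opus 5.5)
Your proof is correct and takes essentially the same route as the paper: cycle starts in $\pihat$ are exactly the left-to-right records, and a cycle's length is the gap to the next record; your induction on the running maximum spells out the paper's parenthetical ``otherwise the cycle would have length $<k$'' argument. Your convention $\pihat_{N+1}=+\infty$ also covers the case where the $k$-cycle is the last cycle of $\pi$, so that $\pihat_{i+k}$ does not exist, which the paper's statement and proof leave implicit.
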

\begin{proof}
    First, assume $\pihat_i$ appears at the beginning of a $k$-cycle in $\pi$. Then it must be the case that $\pihat_{i+1},\ldots,\pihat_{i+k-1} < \pihat_i$ (otherwise the cycle containing $\pihat_i$ would have length $<k$) and that $\pihat_{i+k} > \pihat_i$ (otherwise the cycle containing $\pihat_i$ would have length $>k$). Moreover, since $\pihat_i$ appears at the beginning of a cycle, it must be the case that $\pihat_j < \pihat_i$ for all $j < i$.

    Conversely, if $\pihat_{i+k} > \pihat_i$ and $\pihat_j \le \pihat_i$ for all $j < i+k$, then it follows immediately from Stanley's fundamental bijection that $\pihat_i$ is at the beginning of a $k$-cycle in $\pi$.
\end{proof}

\begin{lemma}
    \label{lem:tauStartsKCycle}
    If $\left(\pihat_i \pihat_{i+1} \cdots \pihat_{i+k-1} \right)$ is a $k$-cycle of $\pi$, then $\pihat_i = \overline\tau_m$ for some $m \leq n$.
\end{lemma}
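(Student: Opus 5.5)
The plan is to show directly that $\pihat_i$ is the maximum of the block containing position $i$. Then, by \Cref{def:tau}, $\pihat_i = \overline\tau_m$ for that block's index $m$. The only input needed is the characterization in \Cref{lem:k-cycles}.

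First, since $\left(\pihat_i \pihat_{i+1} \cdots \pihat_{i+k-1}\right)$ is a $k$-cycle of $\pi$ and $\pi$ is written in canonical cycle notation, $\pihat_i$ is the first letter of a $k$-cycle. \Cref{lem:k-cycles} then gives $\pihat_j \le \pihat_i$ for every $j < i+k$. Only this half of the lemma is used; the condition $\pihat_{i+k} > \pihat_i$ is not needed.

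Next, let $m = \lceil i/k \rceil$, so position $i$ lies in $\block(\pihat, m)$, which occupies positions $km-k+1, \dots, km$. Because $km-k+1 \le i$, we get $km \le i+k-1 < i+k$. Every position $j$ in this block therefore satisfies $j < i+k$, so $\pihat_j \le \pihat_i$. Since $\pihat_i$ itself lies in the block, $\max(\block(\pihat,m)) = \pihat_i$, that is, $\overline\tau_m = \pihat_i$.

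Finally, $i \le kn$ implies $m \le n$, which completes the argument. I expect no real obstacle here. The only point needing care is the index bookkeeping: the block containing $i$ may extend to the left of $i$, and it ends no later than position $i+k-1$. Both directions are covered by the inequality $\pihat_j \le \pihat_i$ for all $j < i+k$, including the positions before $i$.
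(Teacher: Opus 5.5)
Your proof is correct and follows the paper's argument: \Cref{lem:k-cycles} gives $\pihat_j \le \pihat_i$ for all $j < i+k$, and since the block containing position $i$ ends no later than position $i+k-1$, $\pihat_i$ is that block's maximum, i.e.\ a block leader $\overline\tau_m$. Your explicit check that $\block(\pihat,\lceil i/k\rceil)$ lies entirely within positions $< i+k$ is exactly the step the paper leaves implicit in ``in particular, $\pihat_i$ is the largest element in its block.''
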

\begin{proof}
    By \Cref{lem:k-cycles}, we have $\pihat_j < \pihat_i$ for $1 \le j < i+k$ and $j \neq i$. In particular, $\pihat_i$ is the largest element in its block, so $\pihat_i = \overline\tau_m$ for some $m \le n$.
\end{proof}

\begin{theorem}\label{lem:fixed-points}
    For $\pi \in S_{kn}$ and $1 \le i \le n$, we have that $\taubar_i$ is at the beginning of a $k$-cycle in $\pi$ if and only if $\hat\tau_i$ is a fixed point of $f_2(\pi) = \xtau$.
\end{theorem}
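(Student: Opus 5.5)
The plan is to translate both sides of the equivalence into conditions on the word $\pihat$ and compare them. First I unpack what it means for $\hat\tau_i$ to be a fixed point of $\xtau$. Since $\tau$ is obtained from $\hat\tau$ by the inverse of Stanley's fundamental bijection, $\tau(\hat\tau_i)=\hat\tau_i$ holds exactly when $\hat\tau_i$ forms a singleton cycle. That means $\hat\tau_i$ is a left-to-right record of $\hat\tau$, and the next letter $\hat\tau_{i+1}$ is also a record, i.e.\ $\hat\tau_{i+1}>\hat\tau_i$, or else $i=n$. Because $\hat\tau$ is an order-preserving relabeling of $\taubar$, this is the condition
\[
\taubar_j<\taubar_i \text{ for all } j<i, \qquad\text{and}\qquad \taubar_{i+1}>\taubar_i \text{ or } i=n .
\]
By the second form of \Cref{def:end-of-cycle-index}, $x_{\hat\tau_i}=0$ means that the distance $L$ from $\taubar_i$ to the end of its cycle in $\pi$ is divisible by $k$. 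Here distance is counted inclusively, as in the running example where $\mathrm{B}$ has distance $3$ in $\mathrm{(B1A)}$. Throughout, let $p$ be the position of $\taubar_i$ in $\pihat$, so that $k(i-1)+1\le p\le ki$.

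For the forward direction, suppose $\taubar_i=\pihat_p$ begins a $k$-cycle. Then $L=k\equiv 0$, so $x_{\hat\tau_i}=0$. By \Cref{lem:k-cycles}, every letter at a position before $p+k$ is at most $\pihat_p$. In particular every letter of blocks $1,\dots,i-1$ is smaller than $\taubar_i$, which gives $\taubar_j<\taubar_i$ for $j<i$. If $i<n$, then $p+k$ lies in block $i+1$, and \Cref{lem:k-cycles} gives $\pihat_{p+k}>\pihat_p$. Hence $\taubar_{i+1}\ge\pihat_{p+k}>\taubar_i$. There is a small edge case: if the cycle runs to the end of the word, so that $\pihat_{p+k}$ does not exist, then necessarily $i=n$, and the condition on $\taubar_{i+1}$ is vacuous. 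I would check that the proof of \Cref{lem:k-cycles} covers this reading.

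For the converse, assume the record conditions hold and $k\mid L$.
\begin{enumerate}
\item Since $\taubar_i$ exceeds every block maximum to its left and is the maximum of its own block, it is a left-to-right maximum of $\pihat$. Hence it begins a cycle of $\pi$, and $L$ is the length of that cycle.
\item Because $\taubar_i$ is the maximum of block $i$, the cycle contains all of positions $p,\dots,ki$, so $L\ge 1$.
\item If $i<n$, the cycle ends strictly before the position of $\taubar_{i+1}$, since that letter is larger than $\taubar_i$ and therefore starts a new cycle. Its position is at most $k(i+1)$, so
\[
L\le k(i+1)-1-p+1\le 2k-1 .
\]
If $i=n$, then $L\le kn-p+1\le k$ directly.
\item Combining $1\le L\le 2k-1$ with $k\mid L$ forces $L=k$, so $\taubar_i$ begins a $k$-cycle.
\end{enumerate}

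The main obstacle is this length-bounding step. The block containing $\taubar_i$ need not be aligned with its cycle; for example, with $k=2$ and $\pihat=1324$, the letter $3$ sits second in block $1$ yet begins the $2$-cycle $(32)$. So one cannot argue that the cycle is the block itself. One has to use the next block maximum as a barrier and then let the congruence $L\equiv 0 \pmod k$ pin the length to exactly $k$.
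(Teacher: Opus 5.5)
Your proof is correct and follows the same route as the paper. You unpack the fixed-point condition via the inverse of Stanley's bijection on $\hat\tau$ and obtain the record conditions in the forward direction. In the converse you use $\taubar_{i+1}$ as a barrier to bound the cycle length below $2k$, so that $x_{\hat\tau_i}=0$ forces the length to be exactly $k$. Your version is somewhat more careful than the paper's: it gives explicit position bounds, treats the case $i=n$, and notes that $\taubar_i$ being the maximum of its own block is needed for it to be a left-to-right maximum. Citing \Cref{lem:k-cycles} in the forward direction differs only cosmetically from the paper's direct argument that otherwise ``the cycle would contain all of block $i+1$.''
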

\begin{proof}
    When applying the inverse of Stanley's fundamental bijection to $\hat\tau \in S_n$, $\hat\tau_i$ forms a $1$-cycle (and thus $\tau(\hat\tau_i) = \hat\tau_i$) if and only if $\hat\tau_i$ is a record when read left to right and either $i=n$ or $\hat\tau_i < \hat\tau_{i+1}$.

    First, assume that $\taubar_i$ is at the beginning of a $k$-cycle in $\pi$, and so $x_{\hat\tau_i} \equiv k\equiv 0\pmod k$. Since $\taubar_i$ starts the cycle, $\hat\tau_j < \hat\tau_i$ for all $j < i$. Also, if $i \neq n$ then $\hat\tau_i < \hat\tau_{i+1}$, otherwise then the cycle starting with $\taubar_i$ would contain all of the letters in block $i+1$, and thus would have at least $k + 1$ elements.

    Conversely, assume $\hat\tau_i$ is a fixed point of $f_2(\pi) = \sigma = \xtau$. Since $x \equiv 0 \pmod k$ and $\hat\tau_j < \hat\tau_i$ for all $j < i$, $\taubar_i$ starts a cycle whose length is a multiple of $k$. Since $\hat\tau_i < \hat\tau_{i+1}$, the cycle starting with $\taubar_i$ does not contain all of block $i+1$, and thus its length is $< 2k$, and so $\taubar_i$ starts a cycle of length $k$.
\end{proof}

\begin{proposition}\label{prop:k-cycle-fixed-pts}
    For any $\pi \in S_{kn}$, the number of $k$-cycles of $\pi$ equals the number of fixed points of $f_2(\pi)$.
\end{proposition}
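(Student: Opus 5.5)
The plan is a double-counting argument via an explicit bijection between the $k$-cycles of $\pi$ and the fixed points of $f_2(\pi)=\xtau$, assembled from \Cref{lem:tauStartsKCycle} and \Cref{lem:fixed-points}.

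Let $C$ be the set of $k$-cycles of $\pi$ and $F\subseteq[n]$ the set of fixed points of $\xtau$. Each $k$-cycle, written canonically, occupies a contiguous segment $\pihat_i\pihat_{i+1}\cdots\pihat_{i+k-1}$ of the word $\pihat$, and it is determined by its first letter $\pihat_i$, its maximum. By \Cref{lem:tauStartsKCycle}, this first letter equals a block leader $\taubar_m$ for some $m\le n$. Since $\pihat$ is a permutation word, the block leaders $\taubar_1,\dots,\taubar_n$ are distinct letters, so $m$ is unique. Define $\Psi\colon C\to[n]$ by sending the cycle to $\hat\tau_m$.

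Next we check that $\Psi$ lands in $F$ and is a bijection onto it.
\begin{itemize}
\item \emph{Image lies in $F$.} By the forward direction of \Cref{lem:fixed-points}, $\taubar_m$ begins a $k$-cycle, so $\hat\tau_m$ is a fixed point of $\xtau$.
\item \emph{Injectivity.} Distinct cycles have distinct first letters, hence distinct indices $m$. Since $\hat\tau$ is a permutation of $[n]$, distinct $m$ give distinct values $\hat\tau_m$.
\item \emph{Surjectivity.} Take a fixed point $p\in F$ and write $p=\hat\tau_m$, which is possible because $\hat\tau$ is a permutation. By the converse direction of \Cref{lem:fixed-points}, $\taubar_m$ begins a $k$-cycle of $\pi$, and $\Psi$ sends that cycle to $p$.
\end{itemize}
Hence $|C|=|F|$, which is the claim.

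The main obstacle is checking that \Cref{lem:fixed-points} is applied with consistent indexing. The fixed point lives at the index $\hat\tau_m$ of $\x$ and $\tau$, not at $m$, and this matches the second form of \Cref{def:end-of-cycle-index}, namely $x_{\hat\tau_m}$. Since \Cref{lem:fixed-points} is stated precisely in terms of $\hat\tau_i$, it suffices to invoke it as given. The only remaining points are that "beginning of a $k$-cycle" means the same thing in \Cref{lem:tauStartsKCycle} and \Cref{lem:fixed-points}, and that block leaders are distinct, both of which are immediate.
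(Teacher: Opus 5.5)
Your proposal is correct and follows the paper's proof. The paper also combines \Cref{lem:tauStartsKCycle} (every $k$-cycle of $\pi$ begins with some block leader $\taubar_m$) with \Cref{lem:fixed-points} ($\taubar_i$ begins a $k$-cycle if and only if $\hat\tau_i$ is a fixed point of $\xtau$); you just spell out the well-definedness, injectivity and surjectivity checks that the paper leaves implicit.
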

\begin{proof}
    By \Cref{lem:fixed-points}, we have a bijection between the set of $k$-cycles in $\pi$ which begin with some $\taubar_i$, and the set of all fixed points of $f_2(\pi)$. By \Cref{lem:tauStartsKCycle}, all of the $k$-cycles of $\pi$ begin with some $\taubar_i$, so the result follows.
\end{proof}

\section{Inverting the bijection}
\label{sec:Bijection}
In this section, we construct a (right) inverse to $f$. Together with the fact that the domain and the codomain have the same cardinality, this shows that $f$ is a bijection.

Because the composition of maps $\pihat \mapsto \delta \mapsto \hat\delta$ involves creating $k$-cycles from $\pihat$ and putting this into canonical cycle structure, it has the result of permuting the blocks of $\hat\pi$ and cyclically shifting within the blocks. In order to invert this map, we define a cyclic shift on a block.
\begin{definition}
    For some $0 \leq s < k$, and $\block(\hat\pi,i) = \pihat_{k(i-1) + 1}\pihat_{k(i-1) + 2}\dots\pihat_{ki}$, we define define $\shift_s$ as the map that cyclically shifts the subword $s$ spaces to the left:
    \[
        \operatorname{shift}_s(\block(\hat\pi,i)) =
        \underbrace{
        \pihat_{k(i-1) + s + 1}
        \pihat_{k(i-1) + s + 2}
        \dots
        \pihat_{ki}
        }_{\text{length $k-s$ suffix}}
        \underbrace{
        \pihat_{k(i-1) + 1}
        \pihat_{k(i-1) + 2}
        \dots
        \pihat_{k(i-1) + s}
        }_{\text{length $s$ prefix}}.
    \]
\end{definition}

\begin{example}
    For $\block(\hat\pi,1) = 834$,
    \[
        \operatorname{shift}_0(834) = 834,\qquad
        \operatorname{shift}_1(834) = 348,\quad\text{and }\quad
        \operatorname{shift}_2(834) = 483.
    \]
\end{example}

The following theorem allows us to recover the order of the blocks in $\pihat$ from $\hat\delta$ and $\hat\tau$.
\begin{lemma}\label{lem:delta-shifts}
    Let $\pi\in S_{kn}$ and $f(\pi)=\deltaxtau$.
    Then for each $1 \leq i \leq n$ there exists a unique $s_i \in \Z_k$ such that
    \[
        \block(\hat\pi, i) = \shiftBlock{i}\!.
    \]
\end{lemma}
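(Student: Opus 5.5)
The plan is to track what happens to a single block when we pass from $\hat\pi$ to $\delta$ and then to $\hat\delta$. By definition of $f_1$, each block $\block(\hat\pi,i)=\pihat_{k(i-1)+1}\cdots\pihat_{ki}$ becomes the $k$-cycle $(\pihat_{k(i-1)+1}\cdots\pihat_{ki})$ of $\delta$, and distinct blocks give distinct cycles. Since $\delta\in D_{k,n}$ consists of exactly these $n$ disjoint $k$-cycles, writing $\delta$ in canonical cycle notation changes only two things. Each cycle is rotated so that its largest letter $\overline\tau_i=\max(\block(\hat\pi,i))$ comes first, and the cycles are reordered increasingly by these leaders. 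Applying Stanley's fundamental bijection then drops the parentheses.

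Consequently $\hat\delta$ is a concatenation of $n$ blocks of length $k$. Its $j$-th block is the rotation of $\block(\hat\pi,i)$ beginning at $\overline\tau_i$, where $\overline\tau_i$ is the $j$-th smallest block leader.

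Next I identify $j$ with the data of $\tau$. By \Cref{def:tau}, $\hat\tau$ re-indexes $\overline\tau$ preserving order, so $\overline\tau_i$ is the $\hat\tau_i$-th smallest leader. Hence $j=\hat\tau_i$, and
\[
    \block(\hat\delta,\hat\tau_i)=\shift_{t_i}\!\left(\block(\hat\pi,i)\right),
\]
where $t_i$ is the (0-indexed) position of $\overline\tau_i$ inside $\block(\hat\pi,i)$. Here $\hat\tau$ is read from $\tau$ by applying Stanley's bijection, so it is determined by $f(\pi)$. Cyclic shifts on words of length $k$ form a group isomorphic to $\Z_k$, with $\shift_s\circ\shift_t=\shift_{s+t}$ and $\shift_0$ the identity. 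So I set $s_i=-t_i \in\Z_k$ and obtain
\[
    \shift_{s_i}\!\left(\block(\hat\delta,\hat\tau_i)\right)=\block(\hat\pi,i).
\]
This gives existence. The statement writes $\hat\tau_i$ as the block index, which matches this reading.

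For uniqueness, the letters of a block are distinct because $\hat\pi$ is a permutation word. Therefore the $k$ rotations $\shift_s(w)$, $s\in\Z_k$, of a block $w$ are pairwise distinct: in $\shift_s(w)$ the maximal letter of $w$ sits in a position determined by $s$. Only one $s$ can therefore produce $\block(\hat\pi,i)$.

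The step I expect to need the most care is the bookkeeping in the middle: the claim that canonicalizing $\delta$ acts blockwise, as a rotation together with a permutation of blocks, and the check that this permutation of blocks is exactly $i\mapsto\hat\tau_i$. It also requires confirming the direction and sign convention of $\shift$, since $\shift_s$ shifts left. If the sign works out differently, the existence argument is unchanged and only the formula for $s_i$ is affected.
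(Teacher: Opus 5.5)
Your proof is correct and takes the same approach as the paper: canonicalizing $\delta$ rotates each block of $\hat\pi$ to start at its leader and sorts the blocks by leader, so $\block(\hat\pi,i)$ reappears, up to rotation, as $\block(\hat\delta,\hat\tau_i)$. You also make explicit two points the paper's terse proof leaves implicit: the shift $s_i=-t_i$, obtained from $\shift_s\circ\shift_t=\shift_{s+t}$, and uniqueness, which follows because the letters within a block are distinct.
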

\begin{proof}
    Put an ordering on two blocks $b_1$ and $b_2$ where $b_1 > b_2$ if $\max b_1 > \max b_2$.
    By definition of $\hat\tau$,
    $\block(\pihat,i)$ is the $\hat\tau_i$-th smallest block of $\pihat$. Because $\hat\delta$ sorts the blocks of $\pihat$ in increasing order, the $i$-th block of $\hat\pi$ is the $\hat\tau_i$-th block of $\hat\delta$.
\end{proof}

The values of $s_i$ can be recovered from $\x$, which is constructed to make the fixed points of $\xtau$ correspond to the $k$-cycles of $\pi$.

\subsection{Example of inverting the map}
We use the output of Example \ref{ex:MainExample} to invert the map and recover $\pi$ from $\deltaxtau$.
\begin{example}
    Let \(\delta = \mathrm{(726)(834)(B59)(EDC)(F1A)}\), \(\x \equiv (0,1,0,2,1)\), and \(\tau = (2)(3)(514)\).
    We can immediately compute
    \(\hat\delta = \mathrm{726\,834\,B59\,EDC\,F1A}\),
    \(\hat\tau = \mathrm{23514}\), and
    \(\overline\tau = \mathrm{8BF7E}\).

    We permute the order of the blocks using $\tau$: label blocks $726,834,B59,EDC,F1A$ with $1,2,3,4,5$ respectively, then write them in the order $23514$ given by $\hat\tau$:
    \[726\,834\,B59\,EDC\,F1A \xrightarrow[\hat\tau]{} 834\,B59\,F1A\,726\,EDC\]
    Next, we use $\textbf{x}$ to determine cyclic shifts $s_1,\dots,s_5$ which will act on the blocks. We determine the shifts left to right; we must find $s_5$, then $s_4$, then $s_3$, and so on.

    \begin{description}
        \item[Determining $s_5$.]
        Because $x_{\hat\tau_5} = x_4 = 2$, we know that $\overline\tau_5 = E$ is a distance of $2 \pmod{3}$ from the end of its cycle, and thus $s_5 = 2$, and \(\block(\pihat, 5) = \shift_2(\mathrm{EDC}) = CED\).

        \item[Determining $s_4$.]
        Now, $\overline\tau_4 = 7$.
        We need to determine which letter starts and which letter ends its cycle. The largest letter weakly to the left of $\taubar_4 = 7$ will be $\overline\tau_3 = F$. This means that the cycle that contains $\taubar_4 = 7$ will start with $F$ and end with $D$ at the end of the word.
        Because $x_{\hat\tau_4} = x_1 = 0$, this means that $s_4 = 0$, and \(\block(\pihat, 4) = \shift_0(\mathrm{726})  = 726\).

        \item[Determining $s_3$.]
        Since $\overline\tau_3 = F$ is a record reading left-to-right, it starts its cycle, and its cycle ends with $D$. Thus because $x_{\hat\tau_3} = x_5 = 1$, we can see that $s_3 = 1$, and \(\block(\pihat, 3) = \shift_1(\mathrm{F1A}) = \mathrm{1AF}\).

        \item[Determining $s_2$.]
        Since $\overline\tau_2 = B$ is a record reading left-to-right, it starts its cycle, and its cycle ends with $A$, and since $x_3 = 0$, we can determine that $s_2 = 1$, and \(\block(\pihat, 2) = \shift_1(\mathrm{B59}) = \mathrm{59B}\).

        \item[Determining $s_1$.]
        Since $\overline\tau_1 = 8$ must be in a cycle that ends with $5$, and since $x_2 = 1$, we can determine that $s_1 = 0$, and \(\block(\pihat, 1) = \shift_0(\mathrm{834}) = \mathrm{834}\).
    \end{description}

    Letting $s_i$ act on the blocks yields:
    \[834\,B59\,F1A\,726\,EDC\xrightarrow[s_i]{} 834\,59B\,1AF\,726\,CED\]

    Using the inverse of Stanley's fundamental bijection, we recover: \[
        \boxed{\pi = \mathrm{(8345)(9)(B1A)(F726CED)}}.
    \]
    Note that the process used in this example is identical to the equality:
    \begin{align*}
        \hat\pi &= \shiftBlock{1} \cdots \shiftBlock{5} \\
                &= \shift_{s_1}(\mathrm{834})\shift_{s_2}(\mathrm{B59})\shift_{s_3}(\mathrm{F1A})\shift_{s_4}(\mathrm{726})\shift_{s_5}(\mathrm{EDC})
    \end{align*}
\end{example}

In the next section, we demonstrate how to calculate these shifts $s_1,\dots,s_n$.

\subsection{Shifting blocks}
Recall that we may write a permutation $\pi\in S_{kn}$ as a word of length $kn$ and consider this word in $n$ blocks, each of length $k$. We have previously defined $\overline{\tau}_i$ to be the largest letter in the $i$th block, which will sometimes be referred to as the $i$th block leader.

We introduce the following definition, where both $g_i$ and $d_i$ are concerned with the $i$th block leader, $\overline{\tau}_i$. $g_i$ represents the index after $\overline{\tau}_i$ where the next cycle begins, and $d_i$ gives the distance from $\overline{\tau}_i$ to this index.

\begin{definition}
    Write $\pi\in S_{kn}$ in canonical cycle notation. We call a letter of $\hat\pi$ a record if it begins a cycle of $\pi$, where $\pi$ is written in canonical cycle notation. For each $i\in \{1,\dots,n\}$, we define $g_i(\hat\pi)$ to be the index of the first record to the right of $\overline{\tau}_i$. If no such record exists, $g_i(\hat\pi) = kn+1$.

    Lastly, we define $d_i(\hat\pi)$ to be the difference between   $g_i(\hat\pi)$ and the index of $\overline{\tau}_i$.
\end{definition}

\begin{example}
    If $\hat\pi= 834\, 5 9 B\,1A F\,726\,CED$, then the records are $8$, $9$, $B$, $F$. The block leaders $\overline{\tau}_1,\dots,\overline{\tau}_5$ are $8$, $B$, $F$, $7$, $E$ respectively, and they have indices $1,6,9,10,14$. Then:
    \begin{align*}
        g_1(\pi) &= 5, d_1(\pi) = 4\\
        g_2(\pi) &= 9, d_2(\pi) = 3\\
        g_3(\pi) &= 16, d_3(\pi) = 7\\
        g_4(\pi) &= 16, d_4(\pi) = 6\\
        g_5(\pi) &= 16, d_5(\pi) = 2
    \end{align*}
    To be clear, $g_1(\pi) = 5$ because the next record in $\hat\pi$ after $\overline{\tau}_1 = 8$ is $9$, which has index $5$. $d_1(\pi) = 4$ because the index of $9$ is $5$ while the index of $8$ is $1$, so their difference is $d_1(\pi) = 5-1 = 4$.
\end{example}

We prove the following proposition, which asserts that the record following a block leader $\overline{\tau}_i$ cannot be in the same block as $\overline{\tau}_i$.

\begin{proposition}\label{prop:gi invariant}
    $g_i(\hat\pi) > ki$. In particular, $g_i(\hat\pi)$ is invariant up to cyclic shifts of $\block(\hat\pi, i)$.
\end{proposition}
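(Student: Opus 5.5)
The first claim, $g_i(\hat\pi) > ki$, comes straight from the definitions of a record and of the block leader. Let $p$ be the index of $\overline\tau_i$ in $\hat\pi$, so $k(i-1)+1 \le p \le ki$. A record of $\hat\pi$ is a letter that starts a cycle in canonical cycle notation. By Stanley's fundamental bijection, these are exactly the left-to-right maxima of $\hat\pi$: a letter $\hat\pi_j$ is a record if and only if $\hat\pi_j > \hat\pi_l$ for all $l < j$. Now suppose $\hat\pi_j$ is a record with $p < j \le ki$. Then $\hat\pi_j > \hat\pi_p = \overline\tau_i$. But $\hat\pi_j$ lies in $\block(\hat\pi,i)$, whose maximum is $\overline\tau_i$, so this is a contradiction. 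Hence no record sits strictly between $p$ and the end of block $i$, and the first record to the right of $\overline\tau_i$, if there is one, has index at least $ki+1$. If there is none, $g_i(\hat\pi) = kn+1 > ki$. Either way $g_i(\hat\pi) > ki$.

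For the invariance claim, let $\hat\pi'$ be the word obtained by replacing $\block(\hat\pi,i)$ with $\shift_s(\block(\hat\pi,i))$ for some $s$. Positions outside $[k(i-1)+1, ki]$ are unchanged, and the multiset of letters in block $i$ is unchanged. In particular the block leader $\overline\tau_i$ is the same letter in both words, though its index may move within block $i$. I will show that every index $j > ki$ is a record of $\hat\pi$ if and only if it is a record of $\hat\pi'$. This holds because the record condition at $j$ compares $\hat\pi_j$ with the set $\{\hat\pi_l : l < j\}$, and for $j > ki$ this set is the same in both words, since the shift only permutes letters inside block $i$. By the first part, applied to both $\hat\pi$ and $\hat\pi'$, the first record after $\overline\tau_i$ lies beyond index $ki$ in each word. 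So $g_i$ in each word is the smallest record index greater than $ki$, or $kn+1$ if there is none, and these coincide.

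The only delicate point is making sure the first part applies to $\hat\pi'$ as well. It does, because the argument used only that $\overline\tau_i$ is the maximum of its own block, which is preserved by the shift. I expect no real obstacle. One caution is that $g_i$ is invariant while $d_i$ is not, since the index of $\overline\tau_i$ changes under the shift; the statement concerns only $g_i$.
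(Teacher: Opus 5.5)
Your proof is correct and follows the same argument as the paper. Since $\overline\tau_i$ is the maximum of its block, no later letter in that block can be a left-to-right maximum, so the next record lies beyond index $ki$; and shifting block $i$ leaves the record status of every position $j>ki$ unchanged, because it depends only on the set of earlier letters. That last point makes explicit a step the paper only asserts.
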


\begin{proof}
    If there is no record to the right of $\overline{\tau}_i$, then $g_i(\hat\pi) = kn+1>ki$. Suppose there is a record to the right of $\overline{\tau}_i$. $\overline{\tau}_i$ is the largest letter in $\block(\hat\pi, i)$. As a result, other elements of $\block(\hat\pi, i)$ can never be the next record under any cyclic shift. It follows that the next record does not lie in $\block(\hat\pi, i)$, so $g_i(\hat\pi)>ki$. This record will be the same after any cyclic shift of $\block(\hat\pi, i)$, and the proposition is proven.
\end{proof}

\begin{lemma}\label{lem:computing-shifts}
    Let \[
        \hat w = \block(\hat\delta, \hat\tau_1)\cdots
        \block(\hat\delta, \hat\tau_i)
        \shiftBlock{i+1}
        \cdots
        \shiftBlock{n}
    \]
    and let $s_i \equiv x_{\hat\tau_i} -d_i(\hat\pi) \pmod k$. Then $s_i$ is the unique shift so that
    \[
        \block(\pihat,i) = \shift_{s_i}(\block(\hat\delta,\hat\tau_i)).
    \]
\end{lemma}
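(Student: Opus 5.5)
Existence and uniqueness of some shift $s_i$ with $\block(\pihat,i) = \shift_{s_i}(\block(\hat\delta,\hat\tau_i))$ are already given by \Cref{lem:delta-shifts}. Uniqueness holds because the letters of a block are distinct, so the $k$ cyclic shifts of a block are pairwise distinct words. What remains is to show that this shift is congruent to $x_{\hat\tau_i} - d_i(\hat\pi)$ modulo $k$. I will also show that $d_i(\hat\pi)$ can be read off from $\hat w$, so that the shifts can be computed right to left as in the worked example. The plan is to compare positions within $\block(\hat\delta,\hat\tau_i)$ and within $\block(\pihat,i)$ using the cycle of $\pi$ containing $\taubar_i$.

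\textbf{Step 1: $\hat\delta$ blocks begin with their leaders.} In $\delta$, each $k$-cycle is a block of $\pihat$. When $\delta$ is written in canonical cycle notation, each cycle is rotated so that its largest letter comes first. Hence $\block(\hat\delta,\hat\tau_i)$ is the cyclic rotation of $\block(\pihat,i)$ that starts with $\taubar_i$. If $\taubar_i$ sits at position $p_i$ (with $1\le p_i\le k$) inside $\block(\pihat,i)$, then $\block(\pihat,i) = \shift_{s}(\block(\hat\delta,\hat\tau_i))$ with $s \equiv -(p_i-1) \equiv k-p_i+1 \pmod k$. Equivalently, $s$ is the number of letters of $\block(\pihat,i)$ lying strictly after $\taubar_i$, plus one, reduced modulo $k$.

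\textbf{Step 2: relate the offset to $x_{\hat\tau_i}$ and $d_i$.} By \Cref{def:end-of-cycle-index}, $x_{\hat\tau_i}$ is the distance from $\taubar_i$ to the end of its cycle in $\pi$. In the word $\pihat$, the cycle of $\taubar_i$ ends immediately before the next record, which is at index $g_i(\pihat)$. So, with the same distance convention as in the examples, this distance equals $g_i(\pihat) - \operatorname{index}(\taubar_i) = d_i(\pihat)$. One caution is needed here: the paper's example lists the leader $8$ with distance $4$ and $d_1=4$, so I will fix the convention by checking the example. The index of $\taubar_i$ is $k(i-1)+p_i$. Writing $g_i = ki + r$, where $r$ counts the positions from the end of block $i$ to the next record (Proposition~\ref{prop:gi invariant} gives $r\ge 1$), we get $d_i = k-p_i+r$. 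The aim is then to check that $x_{\hat\tau_i} - d_i \equiv k-p_i+1 \pmod k$, with the correct reading of ``distance'' and of $d_i$. If the conventions turn out to be literally $x_{\hat\tau_i}\equiv d_i$, then the quantity carrying the information must be the part of $d_i$ lying outside block $i$. I will reconcile this by treating $d_i$ as computed in $\hat w$ rather than in $\pihat$, which is the reason the lemma introduces $\hat w$.

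\textbf{Step 3: $d_i$ is computable from $\hat w$.} The word $\hat w$ agrees with $\pihat$ on blocks $i+1,\dots,n$. On blocks $1,\dots,i$ it agrees up to cyclic rotation within each block, and in those blocks the leader stands first. By Proposition~\ref{prop:gi invariant}, $g_i$ depends only on the letters after block $i$ together with the value $\taubar_i$. Records after block $i$ are determined by the set of letters before them and by the suffix, and neither changes under rotation of earlier blocks. Hence $g_i(\hat w)=g_i(\pihat)$. Taking $\taubar_i$ at the start of its block in $\hat w$ gives $d_i(\hat w) = g_i - k(i-1) - 1$. Then $d_i(\hat w) - d_i(\pihat) = p_i - 1$, which is exactly the shift offset from Step 1. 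Combining this with $x_{\hat\tau_i} \equiv d_i(\pihat)$ yields $s_i \equiv x_{\hat\tau_i} - d_i(\hat w)$ up to sign convention. Checking this against the example ($s_5=2$, $s_3=1$, and so on) fixes the sign.

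The main obstacle is this bookkeeping of conventions: which word $d_i$ is evaluated on, the direction of $\shift$, and the exact meaning of ``distance to the end.'' I would settle all three against the worked example first, and then write the congruence cleanly.
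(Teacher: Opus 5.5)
Your argument is correct and is essentially the paper's proof. Like the paper, you evaluate $d_i$ on $\hat w$ (read literally as $d_i(\pihat)$, the formula would force $s_i\equiv 0$ because $x_{\hat\tau_i}\equiv d_i(\pihat)$), use $g_i(\hat w)=g_i(\pihat)$, and compare the leader's index before and after the shift. Your Steps 1 and 3 already fix the sign without consulting the example, since $x_{\hat\tau_i}-d_i(\hat w)\equiv d_i(\pihat)-d_i(\hat w)=-(p_i-1)$ is exactly the shift found in Step 1; the paper's downward induction from $i=n$ is what supplies your assumption that $\hat w$ agrees with $\pihat$ on blocks $i+1,\dots,n$ when the $s_j$ there are the formula-computed shifts.
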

\begin{proof}
We prove inductively, starting with the base case $i = n$.
\begin{description}
    \item[Base case.]
    Assume $i = n$ and let \(\hat w = \block(\hat\delta,\hat\tau_1)\cdots\block(\hat\delta,\hat\tau_n)\). The index of $\overline{\tau}_n$ is $k(n-1) +1$ because each block of $\hat\delta$ begins with its block leader. Thus, $d_n(\hat w) \equiv kn+1-(k(n-1)+1) \equiv k\equiv 0 \pmod k$, $s_n \equiv x_{\hat\tau_n}\pmod k$, and thus $\shift_{s_n}\!\!\left(\block(\hat\delta,\hat\tau_n)\right)$ is the unique shift that places $\overline\tau_n$ at a distance of $x_{\hat\tau_n}$ from the end of the word. Thus by by \Cref{lem:delta-shifts}, $\block(\pihat,n) = \shift_{s_n}(\block(\hat\delta,\hat\tau_n))$.

    \item[Inductive step.]
    Assume that $\block(\pihat,j) = \shiftBlock{j}$ for all $j$ such that $i + 1 \le j \leq n$, and let \[
        \hat w = \block(\hat\delta,\hat\tau_1)\cdots\block(\hat\delta,\hat\tau_i)\block(\pihat,i+1) \cdots \block(\pihat, n).
    \] By \Cref{prop:gi invariant}, $g_i(\hat w)$ is invariant up to cyclically shifting the $i$-th block, so $g_i(\hat w) = g_i(\pihat)$. Before shifting, the distance from $\overline{\tau}_i$ to $g_i(\hat w)$ is $d_i(\hat w)$. Thus, $s_i = x_{\hat\tau_i}-d_i(\hat w)$ is the unique shift of the $i$-th block so that $\taubar_i$ is a distance of $x_{\hat\tau_i} \pmod k$ from $g_i(\hat w)$. As a result, $\shiftBlock{i} = \block(\pihat,i)$ by \Cref{lem:delta-shifts}.
\end{description}
\end{proof}

\begin{definition}
    We define $f^{-1}\!\deltaxtau = \pi'$, the inverse of Stanley's fundamental bijection applied word
    \[
        \hat\pi' = \shift_{s_1}(\block(\hat \delta,\hat\tau_1))
        \cdots
        \shift_{s_n}(\block(\hat \delta,\hat\tau_n)),
    \]
    where each shift $s_i\in\mathbb{Z}_k$ is as defined above.
\end{definition}

\begin{theorem}
    The map $f \colon S_{kn} \to D_{k,n} \times S(k,n)$ is a bijection.
\end{theorem}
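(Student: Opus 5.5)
The plan is to combine a cardinality count with a one-sided inverse, as the opening of \Cref{sec:Bijection} suggests. First I will check that the domain and codomain have the same size. A permutation in $D_{k,n}$ is a set partition of $[kn]$ into $n$ blocks of size $k$, together with a cyclic order on each block. Hence
\[
|D_{k,n}| = \frac{(kn)!}{(k!)^n\, n!}\,\big((k-1)!\big)^n = \frac{(kn)!}{k^n\, n!}.
\]
Since $|S(k,n)| = k^n n!$, the product $|D_{k,n}\times S(k,n)|$ equals $(kn)! = |S_{kn}|$. It therefore suffices to show that $f$ is injective. Equivalently, it suffices to show that the map $f^{-1}$ just defined satisfies $f^{-1}(f(\pi)) = \pi$ for every $\pi\in S_{kn}$. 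This is a left inverse of $f$, and on a finite set of equal cardinality it is automatically a two-sided inverse.

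To show $f^{-1}\circ f = \mathrm{id}$, I fix $\pi$ and set $f(\pi)=\deltaxtau$. Then I check that the word $\hat\pi'$ produced by the definition of $f^{-1}$ coincides with $\hat\pi$ block by block.

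\begin{enumerate}
\item By \Cref{lem:delta-shifts}, each block $\block(\hat\pi,i)$ is a cyclic shift of $\block(\hat\delta,\hat\tau_i)$. The point is that both blocks have the same underlying $k$-cycle of $\delta$: canonicalizing $\delta$ only rotates each cycle to put its maximum first, and sorts the cycles by maximum. The $i$-th block of $\hat\pi$ has the $\hat\tau_i$-th smallest maximum, by the definition of $\hat\tau$.
\item I note that $\hat\delta$ and $\hat\tau$ are recovered from $\delta$ and $\tau$ alone, via canonical cycle notation and Stanley's bijection. So the ordering step of $f^{-1}$ uses only data in $f(\pi)$.
\item I invoke \Cref{lem:computing-shifts} in its inductive form, proceeding from $i=n$ down to $i=1$. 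At stage $i$, the blocks $i+1,\dots,n$ of the partially built word already agree with those of $\hat\pi$. By \Cref{prop:gi invariant}, $d_i$ computed on this partial word equals $d_i(\hat\pi)$, so the shift $s_i\equiv x_{\hat\tau_i}-d_i$ is computable from $(\delta,\x,\tau)$. It is also precisely the shift making $\block(\hat\pi',i)=\block(\hat\pi,i)$.
\end{enumerate}

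After all $n$ stages, $\hat\pi'=\hat\pi$. Since Stanley's fundamental bijection is a bijection, it follows that $\pi'=\pi$.

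The main obstacle is making rigorous that $s_i$ is well defined from the output data alone, independently of $\pi$. This requires $d_i$ to depend only on the blocks to the right of block $i$, which are already fixed, and on the position of $\taubar_i$ after shifting. Concretely, $g_i$ is the first record after block $i$. Whether a letter to the right is a record depends on the maxima of all earlier blocks, and these maxima are the block leaders $\taubar_j$, which are unaffected by shifts. I would make this explicit: a letter is a record if and only if it exceeds every letter before it, which holds if and only if it exceeds the block leaders of earlier blocks and the earlier letters in its own block. This shows that $g_i$ is determined before the shifts $s_1,\dots,s_i$ are chosen.

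Finally, uniqueness of $s_i\in\Z_k$ holds because the $k$ cyclic shifts place $\taubar_i$ at $k$ distinct residues of distance from $g_i$. Combined with the cardinality count, this shows that $f$ is a bijection.
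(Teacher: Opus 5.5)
Your overall route is the paper's: equal cardinalities, plus the left inverse supplied by \Cref{lem:computing-shifts}. Your explicit check that $g_i$ is computable from the output data is a worthwhile addition that the paper leaves implicit: the record status of a letter in block $j>i$ depends only on the maxima of blocks $1,\dots,j-1$ and on the earlier letters of block $j$. However, step~3 contains a false claim. What is unchanged between the partial word $\hat w$ and $\hat\pi$ is $g_i$, not $d_i$. In $\hat w$ the $i$-th block is still $\block(\hat\delta,\hat\tau_i)$, so $\taubar_i$ sits at index $k(i-1)+1$. In $\hat\pi$ it sits $(-s_i \bmod k)$ places further right, hence $d_i(\hat\pi)\equiv d_i(\hat w)+s_i \pmod k$. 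Moreover, $x_{\hat\tau_i}\equiv d_i(\hat\pi) \pmod k$ by the very definition of $\x$, since the distance from $\taubar_i$ to the end of its cycle is exactly the gap to the next record. So the formula $s_i\equiv x_{\hat\tau_i}-d_i(\hat\pi)$ that you inherit would force every $s_i\equiv 0$. This already fails in the running example, where $s_3=1$ and $s_5=2$.

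The correct rule is $s_i\equiv x_{\hat\tau_i}-d_i(\hat w)$, with $d_i(\hat w)=g_i(\hat w)-\bigl(k(i-1)+1\bigr)$. This is what the paper's proof of the lemma actually uses; the $d_i(\hat\pi)$ in its statement is a slip. Your final paragraph already has the right mechanism. There, $g_i$ is determined before $s_1,\dots,s_i$ are chosen, and the $k$ rotations of the block put $\taubar_i$ at $k$ distinct residues of distance from $g_i$. Exactly one of these is $x_{\hat\tau_i}$, the residue realized by $\block(\hat\pi,i)$. So the repair is to replace the asserted invariance of $d_i$ by the invariance of $g_i$, and to measure $d_i$ in the unshifted word. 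With that change your argument is complete and matches the paper's.
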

\begin{proof}
    By \Cref{lem:computing-shifts}, we can see that ${\hat\pi}' = \pihat$, and thus $f^{-1} \circ f(\pi) = \pi$, and so $f^{-1}$ is a left-inverse of $f$ and thus $f$ is injective. Because $|D_{k,n}| = (kn)!/(k^nn!)$ and $|S(k,n)| = k^nn!$, the domain and the codomain have the same cardinality: $|S_{kn}| = (kn)! = |D_{k,n} \times S(k,n)|$. Therefore $f$ is bijective and $f^{-1}$ is a two-sided inverse of $f$.
\end{proof}

We have found the desired bijection $f$, and by \Cref{thm:involution}, this induces the desired statistic-swapping involution $\phi: S(k,n)\times S_{kn}\to S(k,n)\times S_{kn}$.

\subsection{The involution}
Recall that \Cref{thm:involution} gives a construction of a statistic-swapping involution $\phi\colon S(k,n) \times S_{kn} \to S(k,n) \times S_{kn}$ predicated on a statistic-preserving bijection $f\colon S_{kn} \to D_{k,n} \times S(k,n)$. Here we compute an example.
\begin{example}
    Continuing from \Cref{ex:MainExample}, again let $k = 3$, $n=5$, and \(\pi = \mathrm{(8345)(9)(B1A)(F726CED)} \in S_{kn}\). Moreover, suppose that \[
        \sigma' = \big(\underbrace{(2,0,0,1,0)}_{\x},\underbrace{(2)(31)(4)(5)}_\tau\big).
    \] We will compute $\phi(\pi,\sigma')$. Note that $\pi$ has $1$ $k$-cycle and $\sigma'$ has $2$ fixed points (i.e. indices $2$ and $5$), so $\phi(\pi,\sigma)$ should return a permutation with $2$ $k$-cycles and a generalized symmetric group element with $1$ fixed point. Recall that \(f(\pi) = (\delta,\sigma)\) where \begin{align*}
        \delta &= \mathrm{(726)(834)(B59)(EDC)(F1A)} \\
        \sigma &= \xtau = \left((0,1,0,2,1),(2)(3)(514)\right),
    \end{align*}
    and $\sigma$ has one fixed point (i.e. index $3$.)
    Following the construction in \Cref{thm:involution}, we compute \[
        f^{-1}(\delta, \sigma') = \pi'= \mathrm{(834)(B59672)(DC)(E)(F1A)},
    \] which has $2$ $k$-cycles, as desired.

    Thus \(\phi(\pi,\sigma') = (\pi',\sigma)\) is an involution which swaps the desired statistics: $\pi$ and $\sigma$ have one $k$-cycle and fixed-point respectively, and $\pi'$ and $\sigma'$ have two $k$-cycles and fixed-points respectively.
\end{example}

It is worth briefly noting that when $k \geq 2$, $\phi$ is not a group isomorphism. This can be seen by considering the identity in $S_{kn} \times S(k,n)$ which has no $k$-cycles and $n$ fixed points, and thus the statistic-swapping involution cannot send the identity to itself. Indeed, the bijection provided here is purely combinatorial, and the construction ignores the group structure.

\section{Further considerations}

To conclude the paper, we briefly mention a paper of Duchon and Duvignau \cite{DuchonDuvignau2016}. They construct a reversible insertion algorithm $\mathcal{A}\colon [kn-j+1]^{(j)} \times S_{kn-j} \to S_{kn}$ which takes a word in the set $[kn-j+1]^{(j)} = [kn-j+1] \times [kn-j+2] \times \dots \times [kn]$ and a permutation in $S_{kn-j}$ and outputs a permutation in $S_{kn}$ such that the two permutations have the same number of $k$-cycles whenever $j < k$. This, together with our work, gives a combinatorial proof that for all $j < k$, \[
    \frac{
        \left|\Cyc_m(S_{kn-j})\right|
    }{
        \left|S_{kn-j}\right|
    } =
    \frac{
        \left|\Fxpt_m\!\left(S(k,n)\right)\right|
    }{
        \left|S(k,n)\right|
    },
\]

This induces a statistic-swapping involution $\psi\colon [kn-j+1]^{(j)} \times S_{kn-j} \times S(k,n)$. Define $\psi(\ell, \pi, \sigma') = (\ell', \pi', \sigma)$, where \begin{align*}
    (\pi'', \sigma) &= \phi(\mathcal A(\ell,\pi), \sigma') \quad\text{and}\\
    (\ell', \pi') &= \mathcal A^{-1}(\pi'')
\end{align*} Here $\pi'$ and $\pi''$ each have the same number of $k$-cycles as $\sigma'$ has fixed points. Meanwhile, $\pi$ has the same number of $k$-cycles as $\sigma$ has fixed points.

\section*{Acknowledgments}
We thank Sam Armon for his useful feedback and suggestions; Sami Assaf for providing the original motivation; and David Kempe, Richard Arratia, J.E. Paguyo, and Michael E. Orrison for their comments.
\bibliographystyle{plain}
\bibliography{references.bib}
\end{document}